\newtheorem{theorem}{Theorem}[section]
\newtheorem{lemma}[theorem]{Lemma}
\theoremstyle{definition}
\theoremstyle{remark}
\numberwithin{equation}{section}
\newcommand{\mmod}[1]{\,\,({\rm{mod}}\,\,#1)}
\def\dbN{{\mathbb N}}
 \def\bfalp{{\boldsymbol \alpha}}
\def\le{\leqslant} \def\ge{\geqslant}
\begin{document}
\title[Well-distribution modulo one and the primes]{Well-distribution modulo one and the primes}
\author[J. Champagne]{J. Champagne}
\address{JC and Y-RL: Department of Pure Mathematics, University of Waterloo, 200 University Avenue West, Waterloo, ON, N2L 3G1, Canada}
\email{jchampag@uwaterloo.ca, yrliu@uwaterloo.ca }
\author[T. H. L\^e]{T. H. L\^e}
\address{THL: Department of Mathematics, University of Mississippi, 305 Hume Hall, University, MS 38677, USA}
\email{leth@olemiss.edu}
\author[Y.-R. Liu]{Y. R. Liu}
\author[T. D. Wooley]{T. D. Wooley}
\address{TDW: Department of Mathematics, Purdue University, 150 N. University Street, West 
Lafayette, IN 47907-2067, USA}
\email{twooley@purdue.edu}
\subjclass[2020]{11K36, 11K06}
\keywords{Well-distribution, prime numbers, equidistribution}
\thanks{The second author is supported by NSF grant DMS-2246921 and a travel award from the Simons Foundation. The third 
author is supported by an NSERC discovery grant. The fourth author is supported by NSF grants DMS-1854398 and DMS-2001549.}
\date{}

\begin{abstract} Let $(p_n)$ denote the sequence of prime numbers, with $2=p_1<p_2<\ldots$. We demonstrate the existence of an 
irrational number $\alpha$ having the property that the sequence $(\alpha p_n)$ is not well-distributed modulo $1$.
\end{abstract}

\maketitle

\section{Introduction}
Consider a real sequence $(s_n)$ and the associated fractional parts $\{ s_n\}=s_n-\lfloor s_n\rfloor$. This sequence is said to be 
{\it equidistributed} (or {\it uniformly distributed}) modulo $1$ when, for each pair $a$ and $b$ of real numbers with $0\le a<b\le 1$, 
one has
\[
\lim_{N\rightarrow \infty}\frac{\text{card} \{ n\in [1,N]\cap \mathbb Z: a\le \{s_n\}\le b\}}{N}=b-a.
\]
A stronger notion than equidistribution is obtained by insisting that, for each natural number $m$, the sequence $(s_{n+m})$ should 
be equidistributed modulo $1$, uniformly in $m$. This property of being {\it well-distributed} modulo $1$ was introduced by 
Petersen \cite{Pet1956} in 1956. More concretely, we say that the sequence $(s_n)$ is well-distributed modulo $1$ when, for each 
pair $a$ and $b$ with $0\le a<b\le 1$, one has
\[
\lim_{N\rightarrow \infty}\sup_{m\in \mathbb N}\Bigl| \frac{\text{card}\{n\in [1,N]\cap \mathbb Z:a\le \{s_{n+m}\}\le b\}}{N}-(b-a)\Bigr| =0.
\] 
It is a consequence of pioneering work of Weyl \cite{Wey1916} that, given a polynomial
\[
\psi_d(t;\bfalp)=\alpha_dt^d+\ldots +\alpha_1t+\alpha_0\in \mathbb R[t]
\]
with an irrational coefficient $\alpha_l$ for some $l\ge1$, the sequence $(\psi_d(n;\bfalp))$ is equidistributed modulo $1$. Moreover, 
Lawton \cite[Theorem 2]{Law1959} established that, under the same conditions, this sequence satisfies the stronger property of being 
well-distributed modulo $1$. We refer the reader to Bergelson and Moreira \cite[\S3]{BL2016} for further discussion on sequences well-distributed modulo $1$.\par

In this note, we focus on the sequence $(p_n)$ of prime numbers with $2=p_1<p_2<\ldots $. It was famously proved by Vinogradov 
that when $\alpha$ is irrational, then the sequence $(\alpha p_n)$ is equidistributed modulo $1$ (see \cite{Vin1942}, for example). 
Subsequently, a relatively simple proof of this conclusion was presented by Vaughan \cite{Vau1977}. It is natural to enquire whether 
this equidistribution extends to a corresponding well-distribution property. The purpose of this note is to answer this question in the 
negative.

\begin{theorem}\label{theorem1.1}
There exists an irrational number $\alpha$ having the property that the sequence $(\alpha p_n)$ is not well-distributed modulo $1$.
\end{theorem}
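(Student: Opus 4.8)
The plan is to build $\alpha$ as the limit of a Liouville-type nested sequence of intervals, the one non-elementary ingredient being a celebrated theorem of Shiu: for any coprime integers $a,q$ and any $N\ge 1$ there exist $N$ consecutive primes all congruent to $a$ modulo $q$. The mechanism exploiting this is simple. If $p_{m+1}<\cdots<p_{m+N}$ are consecutive primes with $p_{m+i}\equiv a\pmod{q}$ for every $i$, write $p_{m+i}=a+c_iq$ with $c_i\ge 1$, so that $\alpha p_{m+i}=\alpha a+jc_i+(\alpha q-j)c_i$ for any $j\in\mathbb Z$; choosing $\alpha$ within $\eta$ of $j/q$ gives $|(\alpha q-j)c_i|<\eta q\max_i c_i$, hence every $\{\alpha p_{m+i}\}$ lies within $\eta q\max_i c_i$ of the \emph{single} point $\{\alpha a\}$ on the circle. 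Thus, if $\eta$ is small enough relative to the location of this block of primes, all $N$ of the numbers $\{\alpha p_{m+i}\}$ are confined to one short arc; arranging that arc to avoid $[\tfrac12,1]$ then makes $\#\{i\le N:\{\alpha p_{m+i}\}\in[\tfrac12,1]\}=0$, whereas well-distribution would force this count to be $(1-\tfrac12)N=\tfrac N2$.

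I would carry out the recursion as follows. Put $I_0=(0,1)$. At stage $k\ge 1$ we are given a nonempty open interval $I_{k-1}\subseteq(0,1)$ of length $\ell_{k-1}$ with the standing requirement $\alpha\in I_{k-1}$. Pick a prime $q_k>4/\ell_{k-1}$; since consecutive multiples of $1/q_k$ are closer together than $\ell_{k-1}/4$, some fraction $j_k/q_k$ with $1\le j_k<q_k$ lies well inside $I_{k-1}$, and in particular $q_k\nmid j_k$. Because $q_k$ is prime we may solve $j_ka_k\equiv\lfloor q_k/3\rfloor\pmod{q_k}$ for an $a_k$ coprime to $q_k$; this is engineered so that $\{j_ka_k/q_k\}=\lfloor q_k/3\rfloor/q_k$ sits within $1/q_k$ of $1/3$. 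Apply Shiu's theorem to obtain $N_k:=k$ consecutive primes $p_{m_k+1}<\cdots<p_{m_k+N_k}$, all $\equiv a_k\pmod{q_k}$, and record $P_k:=p_{m_k+N_k}$. Finally choose $\eta_k>0$ so small that $I_k:=(j_k/q_k-\eta_k,\,j_k/q_k+\eta_k)$ satisfies $I_k\subseteq I_{k-1}$, that $\eta_k<(100P_k)^{-1}$, and that $\eta_k<q_k^{-k}$; proceed to stage $k+1$ with $I_k$.

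Two things remain to verify. First, since the $I_k$ are nested with lengths tending to $0$, their intersection is a single real $\alpha\in(0,1)$, and $|\alpha-j_k/q_k|<q_k^{-k}$ for all $k$, with the fractions $j_k/q_k$ pairwise distinct (distinct prime denominators) and in lowest terms; the usual Liouville argument then shows $\alpha$ cannot be rational, so $\alpha$ is irrational. Secondly, fix any large $k$. Writing $c_i=(p_{m_k+i}-a_k)/q_k\le P_k/q_k$, we get $|(\alpha q_k-j_k)c_i|<\eta_kq_k\cdot(P_k/q_k)=\eta_kP_k<1/100$, so each $\{\alpha p_{m_k+i}\}=\{\alpha a_k+(\alpha q_k-j_k)c_i\}$ lies within $1/100$ of $\{\alpha a_k\}$; moreover $\{\alpha a_k\}$ lies within $\eta_ka_k<\eta_kq_k<1/100$ of $\{j_ka_k/q_k\}$, hence within $1/50+1/q_k$ of $1/3$. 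For $k$ large this puts all $N_k$ numbers $\{\alpha p_{m_k+i}\}$ inside a fixed arc around $1/3$ disjoint from $[\tfrac12,1]$, whence, taking $m=m_k$,
\[
\sup_{m\in\mathbb N}\Bigl|\frac{\#\{n\in[1,N_k]\cap\mathbb Z:\tfrac12\le\{\alpha p_{n+m}\}\le 1\}}{N_k}-\tfrac12\Bigr|\ \ge\ \tfrac12 .
\]
As $N_k=k\to\infty$, the limit in the definition of well-distribution fails for the pair $a=\tfrac12$, $b=1$, which proves the theorem.

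The essential input is Shiu's theorem on runs of consecutive primes in a reduced residue class; the remainder is bookkeeping, the only delicate point being that the successive Diophantine constraints defining $\alpha$ must stay mutually consistent, which is exactly what the freedom to take each $q_k$ large (relative to the previous interval) secures. It is worth remarking in the write-up that there is no conflict with Vinogradov's equidistribution theorem: Shiu's blocks sit at enormous indices $m_k$, so they form a vanishingly small proportion of $\{1,\dots,m_k+N_k\}$.
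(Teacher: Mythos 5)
Your proof is correct, and its engine is the same as the paper's: Shiu's theorem supplies arbitrarily long blocks of consecutive primes in a fixed reduced residue class modulo $q$, and a Liouville-type $\alpha$ lying extremely close to a fraction with denominator $q$ forces all the fractional parts $\{\alpha p_n\}$ over such a block into a single short arc. You package this differently in two respects. First, where the paper fixes the residue $1$ modulo $2^{n_k}$ and writes $\alpha$ as an explicit lacunary binary series $\sum_k 2^{-n_k}$, you build $\alpha$ by nested intervals around fractions $j_k/q_k$ with large prime denominators, using the extra freedom in the residue $a_k$ to steer the cluster point into a neighbourhood of $\tfrac13$. Second, and more substantively, the paper detects the failure of well-distribution through Petersen's Weyl-type criterion (the supremum of the normalised exponential sums tends to $1$ rather than $0$), whereas you argue directly from the definition with the test interval $[\tfrac12,1]$; this makes the endgame self-contained, at the cost of the extra bookkeeping needed to pin the cluster away from the endpoints $0$ and $\tfrac12$ (the exponential-sum route is indifferent to where the cluster sits). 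Two small points to tighten in a write-up: arrange $\overline{I_k}\subseteq I_{k-1}$ so that the nested intersection is guaranteed nonempty, and note explicitly that $q_k\to\infty$ (which follows from $\eta_{k-1}<q_{k-1}^{-(k-1)}$), since your arc estimate carries an error term $1/q_k$ that must eventually be small enough to keep the arc inside $(0,\tfrac12)$.
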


At first sight, this conclusion may seem surprising, since the primes are undeniably equidistributed at large scales. However, as first 
discerned by Maier \cite{Mai1985}, the primes exhibit irregularities in their distribution at very small scales. When it comes to 
generating a failure of well-distribution, the most convenient manifestation of such irregularities of which to avail oneself is that 
established in work of Shiu \cite[Theorem 1]{Shi2000}. The latter author shows, in particular, that for each natural number $q$, there 
exist arbitrarily long strings of prime numbers $p_{n+1},\ldots ,p_{n+k}$ with 
$p_{n+1}\equiv \ldots \equiv p_{n+k}\equiv 1\mmod{q}$. By carefully constructing an associated irrational number $\alpha$, this 
congruential bias amongst consecutive primes may be shown to generate a corresponding failure of well-distribution modulo $1$ in the 
sequence $(\alpha p_n)$. It will be evident from our proof of Theorem \ref{theorem1.1}, which we present in \S2, that many such 
numbers $\alpha$ can be constructed, each of which is transcendental.\par

As is usual, we write $e(z)$ for $e^{2\pi{\rm i}z}$, and for $\theta\in \mathbb R$ we define 
$\|\theta\|=\min \{ |\theta -t|:t\in \mathbb Z\}$.

\section{The application of Shiu's theorem}
Our strategy for proving Theorem \ref{theorem1.1} depends on the careful selection of a sequence $(n_k)$ of natural numbers with 
$1\le n_0<n_1<\ldots$, and the associated real number
\begin{equation}\label{2.1}
\alpha =\sum_{k=0}^\infty 2^{-n_k}.
\end{equation}
The sequence $(n_k)$ is defined iteratively in terms of a consequence of Shiu's theorem on strings of congruent primes. Thus, for 
each $n\in \mathbb N$, there exists an integer $m=m(n)$ with
\begin{equation}\label{2.2}
p_{m+1}\equiv \ldots \equiv p_{m+n}\equiv 1\mmod{2^n}.
\end{equation}
The conclusion of \cite[Theorem 1(i)]{Shi2000} shows that, when $n$ is sufficiently large, such an integer $m(n)$ exists with 
$m(n)<\exp_4(n)$, where $\exp_r(x)$ denotes the $r$-fold iterated exponential function. We define the sequence $(n_k)$ as follows. 
We put $n_0=1$, and then define
\[
m_k=m(n_k),\quad \pi_k=p_{m_k+n_k},\quad n_{k+1}=4\pi_k\quad (k\ge 0).
\]

\par We investigate the well-distribution of the sequence $(\alpha p_n)$ by means of an analogue of Weyl's criterion for 
equidistribution. Thus, as a consequence of \cite[Theorems 2 and 3]{Pet1956}, we see that $(\alpha p_n)$ is well-distributed modulo 
$1$ if and only if, for each $h\in \mathbb N$, one has
\begin{equation}\label{2.3}
\lim_{N\rightarrow \infty}\sup_{m\in \mathbb N}\biggl| N^{-1}\sum_{n=1}^{N}e(h\alpha p_{n+m})\biggr| =0.
\end{equation}

\par Before embarking on the proof of Theorem \ref{theorem1.1} in earnest, we pause to confirm that the real number $\alpha$ 
defined in \eqref{2.1} is irrational.

\begin{lemma}\label{lemma2.1}
The number $\alpha$ is transcendental, and hence is irrational.
\end{lemma}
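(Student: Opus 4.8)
The plan is to prove the stronger assertion that $\alpha$ is a \emph{Liouville number}: for every $j\in\mathbb N$ there exist integers $p,q$ with $q\ge 2$ and $0<|\alpha-p/q|<q^{-j}$. Since every Liouville number is transcendental (and a fortiori irrational), this yields the lemma. The only input needed is the defining recursion for $(n_k)$ together with the congruence \eqref{2.2}; in particular, neither the quantitative bound $m(n)<\exp_4(n)$ from Shiu's theorem nor the prime number theorem is required.

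The first step is to record a crude lower bound on the growth of $(n_k)$. By construction $\pi_k=p_{m_k+n_k}$, and applying \eqref{2.2} with $n=n_k$ and $m=m_k=m(n_k)$ gives $\pi_k\equiv 1\mmod{2^{n_k}}$. Since $\pi_k$ is a prime it is in particular not equal to $1$, so $\pi_k\ge 2^{n_k}+1$, whence
\[
n_{k+1}=4\pi_k\ge 2^{n_k+2}\qquad(k\ge 0).
\]
In particular $(n_k)$ is strictly increasing, so $n_k\to\infty$, and the growth is (at least) doubly exponential, which is far more than is needed below.

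The second step is the standard truncation argument. For each $k$ set $q_k=2^{n_k}$ and
\[
p_k=q_k\sum_{i=0}^{k}2^{-n_i}=\sum_{i=0}^{k}2^{\,n_k-n_i},
\]
which is a positive integer since $n_0<n_1<\cdots$, and $q_k\ge 2$. Because $n_{k+1},n_{k+2},\ldots$ are distinct integers each at least $n_{k+1}$, we obtain
\[
0<\alpha-\frac{p_k}{q_k}=\sum_{i>k}2^{-n_i}\le\sum_{r\ge n_{k+1}}2^{-r}=2^{\,1-n_{k+1}}.
\]
Now fix $j\in\mathbb N$. To conclude it suffices to have $2^{1-n_{k+1}}<2^{-jn_k}=q_k^{-j}$, i.e.\ $n_{k+1}>jn_k+1$; but by the first step $n_{k+1}\ge 2^{n_k+2}$, and since $n_k\to\infty$ this inequality holds for all sufficiently large $k$. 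Hence for every $j$ there are infinitely many $k$ with $0<|\alpha-p_k/q_k|<q_k^{-j}$ and $q_k\ge 2$, so $\alpha$ is a Liouville number, therefore transcendental, and therefore irrational, completing the proof of Lemma \ref{lemma2.1}. There is no genuine obstacle here; the only point requiring care is to extract enough growth of $(n_k)$, and the observation that the congruence $\pi_k\equiv 1\mmod{2^{n_k}}$ already forces $\pi_k\ge 2^{n_k}+1$ handles this at once.
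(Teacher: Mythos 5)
Your proof is correct and takes essentially the same route as the paper: both truncate the series at level $k$ to get the rational approximations $a_k/2^{n_k}$, extract the growth $n_{k+1}>2^{n_k}$ from the congruence $\pi_k\equiv 1\mmod{2^{n_k}}$, and conclude via Liouville's theorem that $\alpha$ is transcendental.
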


\begin{proof}
For each $k\in \mathbb N$, put
\[
q_k=2^{n_k}\quad \text{and}\quad a_k=2^{n_k}\sum_{l=0}^k 2^{-n_l}.
\]
Then we see that $a_k\in \dbN$ and $(q_k,a_k)=1$. Moreover, one has
\[
|q_k\alpha -a_k|\le \sum_{l>k}2^{n_k-l_k}.
\]
It is evident from the definition of the function $m(n)$ via \eqref{2.2} that $\pi_k>2^{n_k}$, whence $n_{k+1}>2^{n_k}$. 
Consequently, whenever $k$ is large enough and $l>k$, one has
\[
0<\sum_{l>k}2^{n_k-n_l}<\sum_{l>k}2^{-l-kn_k}<q_k^{-k}.
\]
We therefore deduce that $|q_k\alpha -a_k|<q_k^{-k}$. By Liouville's theorem (see \cite[Theorem 1.1]{Bak2022}, for example), it 
therefore follows that $\alpha$ cannot be algebraic. Thus, we conclude that $\alpha $ is transcendental, and hence irrational.
\end{proof}

We next show that, for each positive integer $h$, the real number $\| h\alpha (p_n-1)\|$ is infinitely often very small for long strings 
of consecutive primes $p_n$.

\begin{lemma}\label{lemma2.2}
Let $h$ be a positive integer. Then for each sufficiently large positive integer $k$, one has
\[
\| h\alpha (p_{i+m_k}-1)\|<\pi_k^{-2}\quad (1\le i\le n_k).
\]
\end{lemma}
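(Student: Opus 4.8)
The plan is to show that for each fixed $h$, once $k$ is large enough, $h\alpha(p_{i+m_k}-1)$ lies extremely close to an integer uniformly for $1\le i\le n_k$. The starting point is the splitting of the series \eqref{2.1} at the index $k$: write $\alpha = A_k + R_k$, where $A_k = \sum_{l=0}^{k} 2^{-n_l}$ is the partial sum and $R_k = \sum_{l>k} 2^{-n_l}$ is the tail. The congruence \eqref{2.2} guarantees that $2^{n_k} \mid (p_{i+m_k}-1)$ for all $1\le i\le n_k$, so $2^{n_l}\mid (p_{i+m_k}-1)$ for every $l\le k$ as well. Hence $A_k(p_{i+m_k}-1) = \sum_{l=0}^{k} 2^{-n_l}(p_{i+m_k}-1)$ is an integer, and therefore $h A_k(p_{i+m_k}-1) \in \mathbb{Z}$ too. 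Consequently
\[
\| h\alpha(p_{i+m_k}-1)\| = \| h R_k (p_{i+m_k}-1)\| \le h R_k (p_{i+m_k}-1),
\]
provided the right-hand side is at most $\tfrac12$, which we will verify.

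The second step is to bound $R_k(p_{i+m_k}-1)$ from above. Since $p_{i+m_k} \le p_{m_k+n_k} = \pi_k$ for $1\le i\le n_k$, we have $p_{i+m_k} - 1 < \pi_k$. For the tail, the key input (established in the proof of Lemma \ref{lemma2.1}) is that $n_{k+1} > 2^{n_k}$, and more relevantly the explicit relation $n_{k+1} = 4\pi_k$. Thus the leading term of $R_k$ is $2^{-n_{k+1}} = 2^{-4\pi_k}$, and since the exponents $n_l$ grow at least geometrically, $R_k \le 2\cdot 2^{-n_{k+1}} = 2^{1-4\pi_k}$ for $k$ large. Combining,
\[
h R_k(p_{i+m_k}-1) < h\cdot 2^{1-4\pi_k}\cdot \pi_k.
\]
Now $\pi_k \to \infty$ with $k$, so for each fixed $h$ and all sufficiently large $k$ the quantity $h\cdot 2^{1-4\pi_k}\cdot\pi_k$ is not only below $\tfrac12$ (justifying the removal of $\|\cdot\|$ above) but in fact below $\pi_k^{-2}$: indeed $2^{1-4\pi_k}$ decays doubly-exponentially in any power of $\pi_k$, so $h\cdot 2^{1-4\pi_k}\cdot \pi_k^{3} \to 0$, which is more than enough. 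This yields $\|h\alpha(p_{i+m_k}-1)\| < \pi_k^{-2}$ for all $1\le i\le n_k$, as claimed.

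The only mild subtlety — the "hard part," though it is routine — is making the tail estimate for $R_k$ uniform and clean: one must confirm that the exponents $n_l$ for $l > k$ grow fast enough that the geometric-series tail is dominated by twice its first term, which follows from $n_{l+1} = 4\pi_l > 4\cdot 2^{n_l} > 2n_l$ (the middle inequality being $\pi_l > 2^{n_l}$, already noted in Lemma \ref{lemma2.1}). Everything else is bookkeeping: the integrality of $hA_k(p_{i+m_k}-1)$ is immediate from \eqref{2.2}, and the final inequality is a soft comparison of a double exponential against a polynomial in $\pi_k$, valid for all $k$ beyond a threshold depending only on $h$.
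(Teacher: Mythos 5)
Your proof is correct and follows essentially the same route as the paper: split $\alpha$ into the partial sum through index $k$ (whose contribution is an integer multiple of $1$ by Shiu's congruence \eqref{2.2}) plus the tail, and bound the tail's contribution by roughly $h\pi_k 2^{-4\pi_k}$, which is far smaller than $\pi_k^{-2}$. The only cosmetic difference is that the paper packages the tail estimate as $\sum_{l>k}h\pi_k 2^{-l-3\pi_k}$ rather than via the factor-of-two geometric comparison you use; the substance is identical.
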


\begin{proof} Given a positive integer $h$, when $k$ is sufficiently large and $1\le i\le n_k$, one has
\[
0<h(p_{i+m_k}-1)\sum_{l>k}2^{-n_l}\le \sum_{l>k}h\pi_k 2^{-l-3\pi_k}<\pi_k^{-2}.
\] 
Meanwhile, under the same conditions, one finds that since $p_{i+m_k}\equiv 1\mmod{2^{n_k}}$, then
\[
h(p_{i+m_k}-1)\sum_{l=0}^k2^{-n_l}\equiv 0\mmod{1}.
\]
By combining these conclusions, therefore, we infer that for $1\le i\le n_k$, one has
\[
\biggl\| h(p_{i+m_k}-1)\biggl( \sum_{l=0}^k2^{-n_l}+\sum_{l>k}2^{-n_l}\biggr) \biggr\| <\pi_k^{-2},
\]
and the desired conclusion follows from \eqref{2.1}.
\end{proof}

We are now equipped to complete the proof of Theorem \ref{theorem1.1}. Let $h$ be a natural number. From Lemma 
\ref{lemma2.2}, we find that whenever $k$ is sufficiently large and $1\le i\le n_k$, one has
\[
|e(h\alpha p_{i+m_k})-e(h\alpha)|=|e(h\alpha (p_{i+m_k}-1))-1|<\pi_k^{-1}.
\]
Thus, when $N$ is an integer with $1\le N\le n_k$, then
\[
\biggl| N^{-1}\sum_{n=1}^Ne(h\alpha p_{n+m_k})-N^{-1}\sum_{n=1}^Ne(h\alpha )\biggr| <\pi_k^{-1},
\]
whence
\[
1-\pi_k^{-1}<\biggl| N^{-1}\sum_{n=1}^Ne(h\alpha p_{n+m_k})\biggr|\le 1.
\] 
In particular, for each positive integer $N$, we infer that
\[
1-\frac{1}{N}\le \sup_{m\in \mathbb N}\biggl| N^{-1}\sum_{n=1}^Ne(h\alpha p_{n+m})\biggr|\le 1.
\]
This relation confirms that
\begin{equation}\label{2.4}
\lim_{N\rightarrow \infty}\sup_{m\in \mathbb N}\biggl| N^{-1}\sum_{n=1}^Ne(h\alpha p_{n+m})\biggr| =1,
\end{equation}
in contradiction with Weyl's criterion for well-distribution modulo $1$ given in \eqref{2.3}. We are therefore forced to conclude that $(\alpha p_n)$ is not 
well-distributed modulo $1$. In view of Lemma \ref{lemma2.1}, this completes the proof of Theorem \ref{theorem1.1}.\par

Although the case $h=1$ of \eqref{2.4} suffices to prove Theorem \ref{theorem1.1}, we gave a more general argument since it might be useful. We note also that the number 
$\alpha$ may be modified extensively without impairing the validity of our proof. Indeed, given an integer $q\ge 2$ and a sequence of positive integers $(b_k)$ not growing 
too rapidly, the number $\alpha$ defined in \eqref{2.1} could be replaced by
\[
\beta=\sum_{k=0}^\infty b_k q^{-n_k},
\]
and still the sequence $(\beta p_n)$ is not well-distributed modulo $1$. Furthermore, the rapid growth of the integer $n_k$ may be considerably weakened without damaging 
the crude bound of Lemma \ref{lemma2.2}, and so the Liouville-type properties of $\alpha$ may also be relaxed. 

\bibliographystyle{amsbracket}
\providecommand{\bysame}{\leavevmode\hbox to3em{\hrulefill}\thinspace}

\end{document}